\newtheorem{thm}{Theorem}[section]
\newtheorem{prop}[thm]{Proposition}
\newtheorem{lma}[thm]{Lemma}
\newtheorem{dfn}[thm]{Definition}
\theoremstyle{remark}
\newtheorem{rmkk}[thm]{Remark}
\newtheorem{exe}[thm]{Example}
\newenvironment{rmk}{\begin{rmkk}\rm}{\qee\end{rmkk}}
\newcommand{\qee}{\mbox{\hspace{0.2mm}}\hfill$\triangle$}
\newcommand{\Z}{\mathbb{Z}}
\newcommand{\R}{\mathbb{R}}
\newcommand{\C}{\mathbb{C}}
\newcommand{\Q}{\mathbb{Q}}
\newcommand{\Pj}{\mathbb{P}}
\newcommand{\N}{\mathbb{N}}
\newcommand{\cO}{\mathcal{O}}
\newcommand{\codim}{\operatorname{codim}}
\newcommand{\Pic}{\operatorname{Pic}}
\title{\bf \large DEFORMATION OF PAIRS AND NOETHER-LEFSCHETZ  \\[5pt] LOCI IN TORIC VARIETIES }
\author[$^\S$]{Ugo Bruzzo\footnote{On leave of absence from SISSA (International School for Advanced Studies), Trieste, Italy}}
\author[$\ddag$]{William D. Montoya}
\affil[$^\S$]{Departamento de Matem\'atica, Universidad Federal da Para\'iba, \par\vskip-3pt  Campus I, 58051-900, Jo\~ao Pessoa, PB, Brazil}
\affil[$^\S$]{INFN (Istituto Nazionale di Fisica Nucleare), Sezione di Trieste}
\affil[$^\S$]{IGAP (Institute for Geometry and Physics), Trieste}
\affil[$^\S$]{Arnold-Regge Center for Algebra, Geometry \par\vskip-3pt and Theoretical Physics, Torino}
 \affil[$\ddag$]{Instituto de Matem\'atica,  Estat\'istica e Computa\c c\~ao  Cient\'ifica, \par\vskip-3pt Universidad Estadual de Campinas, Rua S\'ergio Buarque \par\vskip-3pt de Holanda 651, 
 13083-859, Campinas, SP, Brazil}
\date{}
\begin{document}
\maketitle

\begin{abstract} We continue our    study of the Noether-Lefschetz loci in toric varieties  and investigate deformation of pairs $(V,X)$ where $V$ is a complete intersection subvariety and $X$ a quasi-smooth hypersurface  in a simplicial projective toric variety $\Pj_{\Sigma}^{2k+1}$, with $V\subset X$. Under some assumptions, we prove that the cohomological class $\lambda_V\in H^{k,k}(X)$ associated to $V$ remains of type $(k,k)$ under an infinitesimal deformation if and only if $V$ remains algebraic.
Actually we prove that locally the Noether-Lefschetz locus is an irreducible component of a suitable Hilbert scheme. This generalizes Theorem 4.2 in our previous work   \cite{Papelito3} and the main theorem proved by Dan in \cite{Dan}.  
\end{abstract}
    
 \let\svthefootnote\thefootnote
\let\thefootnote\relax\footnote{
\hskip-2\parindent 
Date:  \today  \\
{\em 2020 Mathematics Subject Classification:}  14C22, 14C30, 14J40, 14J70, 14M25 \\ 
{\em Keywords:} Noether-Lefschetz locus, Hodge locus, toric varieties \\
Email: {\tt  bruzzo@sissa.it, montoya@unicamp.br}
}
\addtocounter{footnote}{-1}\let\thefootnote\svthefootnote

\newpage
\section{Introduction}
In this short note we  continue our    study of the Noether-Lefschetz loci in toric varieties  and investigate the deformation of pairs $(V,X)$ where $V$ is a $k$-dimensional  complete intersection subvariety and $X$ a quasi-smooth ample hypersurface  in a simplicial projective toric variety $\Pj_{\Sigma}^{2k+1}$ of odd dimension $2k+1\ge  3$,  with $V\subset X$. 
 We assume that the local 
Noether-Lefschetz locus $NL_{\lambda_{V},U}^{k,\beta}$, also called ``Hodge locus'' in the literature when $\Pj_{\Sigma}^{2k+1}$ is a projective space, 
as defined in Section \ref{main},  is not empty (a condition for this to happen is for instance given in Lemma 3.7 of  \cite{BruzzoGrassi}).   Here $\lambda_V$ is the  cohomology  class  of $V$, and $\beta$ is the class
of $X$ in $\Pic(\Pj_{\Sigma}^{2k+1})$.
 Then every irreducible component $L$  of the full Noether-Lefschetz locus $NL_\beta$, namely the locus  in the linear system $\vert\beta\vert$ of the points corresponding
to quasi-smooth  hypersurfaces   whose $(k,k)$-cohomology does not come entirely from the ambient $\Pj_{\Sigma}^{2k+1}$,
  is locally the Hodge  locus \cite{Papelito2}.
In other terms, there exists an open subset $U$ of $\vert\beta\vert$ such that $L\cap U= NL_{\lambda_{V},U}^{k,\beta}$.

Moreover, under the further
 assumption that $\beta$   satisfies $\beta= q\,\eta+\beta'$ $(n\in\N)$, where $q\in \Q_{>0}$, $\eta$ is a primitive ample class in $\Pj_{\Sigma}^{2k+1}$,
 and $\beta'$ is a nef Cartier class, 
  if $X$ cointains a $k$-dimensional complete intersection subvariety with $\deg_\eta V <  q$,  
we will show that under  infinitesimal deformations, $V$ is algebraic if and only if its associated cohomology class $\lambda_V$ is a $(k,k)$ class. 

This extends the work of Dan in \cite{Dan} and the last result of \cite{Papelito3} (Theorem 4.2) for toric varieties with higher Picard rank
(there the Picard number was assumed to be one, and moreover, the result is asymptotic).

\textbf{Acknowledgement.}
We thank Ananyo Dan for  interesting correspondence about the variational Hodge conjecture in   projective spaces.  The first
author's research is partially supported by the CNPq ``Bolsa de Produtividade em Pesquisa'' 313333/2020, by
the PRIN project ``Geometria delle variet\`a algebriche'' and by GNSAGA-INdAM. The second author acknowledges support from FAPESP postdoctoral grant no. 2019/23499-7.

\section{Infinitesimal variation of the Hodge structure}

According to Batyrev and Cox in \cite{BatyrevCox},  the cohomology of hypersurfaces in  projective simplicial toric varieties has a pure Hodge structure. In this section, we introduce its infinitesimal variation following the notions introduced by Carlson, Green, Griffiths and Harris in  \cite{CarlsonGreenGriffithsHarris}.

\begin{dfn} A polarized Hodge structure of weight $n$, denoted by $\{H_{\Z}, H^{p,q} , Q\}$, is a Hodge structure together with a bilinear form $Q:H_{\Z}\times H_{\Z}\rightarrow \Z$ satisfying
$$
\begin{array}{ll} 
Q(\psi,\phi)=(-1)^nQ(\phi,\psi)& \\[3pt]
Q(\psi,\phi)=0& \psi\in H^{p,q},\phi\in H^{p',q'}\, and \, p\neq q'\\[3pt]
i^{p-q}Q(\psi,\bar{\psi})>0 & 0\neq \psi\in H^{p,q}
\end{array} 
$$

\end{dfn}

\begin{dfn}  An infinitesimal variation of Hodge structure $\{H_{\Z}, Hp,q, Q, T, \delta \}$
 is given by a polarized Hodge structure 
together with a vector space $T$ and linear map  
$$\delta: T\rightarrow \bigoplus_{1\leq p \leq n} Hom(H^{p,q}, H^{p-1,q+1}) $$
that satisfies the two conditions:
$$\delta(\xi_1)\delta(\xi_2)=\delta(\xi_2)\delta(\xi_1),\quad \xi_1,\xi_2\in T$$
$$Q(\delta(\xi)\phi,\psi) + Q(\phi,\delta(\xi)\psi)=0 \quad \text{for} \  \xi\in T \ \text{and} \ \phi\in F^p,\ \psi\in F^{n-p+1}.$$
\end{dfn}
Here $F^\bullet$ is the filtration of $H^n$ given by
$$ F^ p =  \bigoplus_{i=0}^p H^{n-i,i}.$$

For a quasi-smooth hypersurface $X$ in a simplicial projective  toric variety, $\delta$ is the morphism associated via   tensor-hom adjuction to $\gamma=\sum_p \gamma_p$, where 
$\gamma_p: T_X\mathcal{M}_{\beta} \otimes H_{\text{prim}}^{p,d-1-p}(X)\rightarrow  H_{\text{prim}}^{p,d-1-p}(X)$ is the natural multiplication map; for more details  see   3.3 in \cite{BruzzoGrassi}. 
Given an infinitesimal variation of Hodge structure of weight $2k$, there is an invariant associated to $\gamma\in H^{k,k}_{\Z}$.

\begin{dfn} The {\em  third invariant} associated to  $\gamma\in H^{k,k}_{\Z}$ is 
$$H^{k,k}(-\gamma):=\{\psi\in H^{k,k} \mid <\delta^0(\xi)\psi, \gamma>=0 \, \forall \xi \in T\} .$$

\end{dfn}

Let us assume $\gamma$ is the primitive part of the   class  of $k$-codimensional algebraic cycle $V=\sum_i n_i V_i$ in $X$ with support $\sigma(V)$.  Let $I_{\sigma(V)}$ be the ideal associated to $\sigma(V)$  and  denote by $H^k(\Omega_X^{k}(-V))$  the image of the composed map 
$$H^k(X,\Omega^{k}_X\otimes I_{\sigma(V)})\rightarrow  H^k(X,\Omega^k_X)\rightarrow H^{k}_{\text{prim}}(\Omega_{X}^{k}).$$

One has the following fact (\cite{GriffithsHarris}, Observation 4.a.4). 
\begin{lma}\label{obs} $H^k(\Omega_X^{k}(-V))\subseteq H^{k,k}(-\gamma) $.
\end{lma}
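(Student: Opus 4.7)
The plan is to exhibit $\psi \in H^k(\Omega_X^k(-V))$ as induced by a class in $H^k(X,\Omega_X^k \otimes I_{\sigma(V)})$, then track the ideal $I_{\sigma(V)}$ through the IVHS operator $\delta^0(\xi)$, and finally argue that the pairing with $\gamma$ reduces to a computation on $\sigma(V)$, where the ideal factor forces vanishing. This mirrors Griffiths--Harris Observation 4.a.4, adapted to the toric setting via the Batyrev--Cox realization of the Hodge structure that was recalled above.

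First, I would fix a lift $\tilde\psi \in H^k(X, \Omega_X^k \otimes I_{\sigma(V)})$ mapping to $\psi$ via the natural map in the definition of $H^k(\Omega_X^k(-V))$. Next, I would unwind the description of $\delta^0(\xi)$ recalled in Section~2 (see also \cite{BruzzoGrassi}, 3.3): for $\xi \in T = T_X\mathcal{M}_\beta$ with Kodaira--Spencer class $\kappa(\xi) \in H^1(X,T_X)$, the operator $\delta^0(\xi)$ is given by cup product with $\kappa(\xi)$ followed by the contraction $T_X \otimes \Omega_X^k \to \Omega_X^{k-1}$. Both of these operations are $\mathcal{O}_X$-linear and therefore respect tensoring with the coherent ideal sheaf $I_{\sigma(V)}$.

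Consequently, the class $\delta^0(\xi)\tilde\psi$ lives in the image of
\[
H^{k+1}\bigl(X, \Omega_X^{k-1} \otimes I_{\sigma(V)}\bigr) \longrightarrow H^{k+1}\bigl(X, \Omega_X^{k-1}\bigr) = H^{k-1,k+1}(X),
\]
so $\delta^0(\xi)\psi$ is represented by a class that vanishes along $\sigma(V)$. I would now invoke the geometric interpretation of the pairing $\langle -\,,\gamma\rangle$: since $\gamma$ is the primitive component of the fundamental class of $V$, pairing with $\gamma$ factors (via Poincar\'e duality on $X$ and restriction to $V_i$, or equivalently through the cycle class map) through the restriction to $\sigma(V)$. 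Because the image of our class in cohomology along $\sigma(V)$ is killed by the ideal factor, the pairing is zero. Summing over the irreducible components $V_i$ with multiplicities $n_i$ gives the required vanishing for all $\xi \in T$.

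The step I expect to be most delicate is not the $\mathcal{O}_X$-linearity bookkeeping, which is formal, but rather the identification of $\langle \delta^0(\xi)\psi,\gamma\rangle$ with a restriction-to-$\sigma(V)$ pairing. One has to justify that even though $\delta^0(\xi)\psi \in H^{k-1,k+1}(X)$ is of the ``wrong'' Hodge type to be integrated over the $k$-dimensional cycle $V$ directly, the pairing against an algebraic class $\gamma = [V]_{\mathrm{prim}}$ can still be represented by a localization at $\sigma(V)$ that sees only the ideal $I_{\sigma(V)}$-factor; this is essentially the content being imported from \cite{GriffithsHarris}, Observation 4.a.4, and I would make this translation explicit in the proof.
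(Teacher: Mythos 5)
The paper does not actually prove this lemma: it is quoted as a fact, the entire ``proof'' being the citation of \cite{GriffithsHarris}, Observation 4.a.4. Your proposal therefore has to stand on its own, and it does not: at the decisive moment you write that identifying $\langle \delta^0(\xi)\psi,\gamma\rangle$ with a restriction-to-$\sigma(V)$ pairing ``is essentially the content being imported from \cite{GriffithsHarris}, Observation 4.a.4.'' That is circular --- the lemma \emph{is} Observation 4.a.4, so the one step you defer to the reference is the whole statement. What you do establish (that cup product with the Kodaira--Spencer class and the contraction $T_X\otimes\Omega_X^k\to\Omega_X^{k-1}$ are functorial in the coefficient sheaf, hence that $\delta^0(\xi)\psi$ lifts to $H^{k+1}(X,\Omega_X^{k-1}\otimes I_{\sigma(V)})$) is correct but peripheral.

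Worse, the route you set up cannot be completed as stated, because it mis-locates the content. If $\langle\cdot,\cdot\rangle$ were the polarization (cup-product) pairing, then $\langle\alpha,\gamma\rangle=0$ for \emph{every} $\alpha\in H^{k-1,k+1}(X)$ and every $(k,k)$-class $\gamma$, simply by Hodge type: equivalently, the restriction of a $(k-1,k+1)$-class to a $k$-dimensional cycle vanishes identically, since $H^{k+1}(V,\Omega_V^{k-1})=0$ already by Grothendieck vanishing. So your argument would ``prove'' the lemma without the ideal sheaf ever doing any work --- a sign that the invariant's content does not sit where you put it. The nontrivial statement, and the one the paper actually uses (Proposition 4.3, where this lemma yields $\langle A_1,\dots,A_{k+1},K_1,\dots,K_{k+1}\rangle\subset T$, i.e.\ that classes of $H^{k,k}_{\mathrm{prim}}(X)$ represented by polynomials vanishing on $\sigma(V)$ are $Q$-orthogonal to $\gamma=[\lambda_V]_{\mathrm{prim}}$, because $T$ is defined through the kernel of multiplication by $x_1\cdots x_r P$ and Gorenstein duality), concerns the pairing of the $(k,k)$-class $\psi$ \emph{itself} with $\gamma$. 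One needs $Q(\psi,\gamma)=\mathrm{Tr}_V(\psi|_V)$ up to the ambient correction (which dies against a primitive class), together with the exact-sequence argument that the composition $H^k(X,\Omega_X^k\otimes I_{\sigma(V)})\to H^k(X,\Omega_X^k)\to H^k(X,\Omega_X^k\otimes\mathcal{O}_{\sigma(V)})$ vanishes. In other words, the ideal-sheaf hypothesis must be spent on $\psi$, not on $\delta^0(\xi)\psi$; the operator $\delta^0(\xi)$ and the skew-adjointness axiom enter only when translating tangent-space conditions, not in this containment. Since your proposal never takes that step, it has a genuine gap.
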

This is the result that we shall need later on.

\section{A dual Macaulay theorem in toric varieties}
We review a  theorem of Macaulay theorem proved in \cite{Papelito3}.   We refer the reader to that paper for all proofs of results in this section.

The Cox ring $S$ of a complete simplicial toric variety $\Pj_\Sigma$  is graded over the effective classes in the class group $\operatorname{Cl}(\Pj_\Sigma)$ 
 $$S = \sum_{\alpha \in \operatorname{Cl}(\Pj_\Sigma)} S^\alpha, \qquad
S^\alpha =  H^0(\Pj_\Sigma,\cO_{\Pj_\Sigma}(\alpha))$$
 (see e.g.~\cite{CoxHom}).
 As $\cO_{\Pj_\Sigma}(\alpha)$ is coherent and $\Pj_\Sigma$ is complete,  each $S^\alpha$ is finite-dimensional over $\C$; in particular, $S^0\simeq \C$. 
For every effective $N\in\operatorname{Cl}(\Pj_\Sigma)$, the set of classes $\alpha\in\operatorname{Cl}(\Pj_\Sigma)$ such that $N-\alpha$ is effective
 is finite.

 We shall give a definition of {\em Cox-Gorenstein ideal} of the Cox rings which generalizes to toric varieties the definition
 given by Otwinowska in \cite{Ania} for projective spaces. 
Let  $B\subset S$ be the  irrelevant ideal, and for a graded ideal $I\subset B$,
denote by $V_{\mathbb T}(I)$ the corresponding closed subscheme of $\Pj_\Sigma$.
 
 \begin{dfn} \label{lambdator} 
 A graded ideal $I$ of $S$ contained in $B$ is said to be
 a Cox-Gorentstein ideal of socle degree  $N\in\operatorname{Cl}(\Pj_\Sigma)$  if
 \begin{enumerate}\itemsep=-2pt
\item there exists a $\C$-linear form
$\Lambda\in (S^N)^\vee$ such that for all $\alpha \in\operatorname{Cl}(\Pj_\Sigma)$
\begin{equation}\label{eqlambda} I^\alpha  =\{f\in S^\alpha \,\vert\, \Lambda(fg) = 0 \ \mbox{for all} \ g\in S^{N-\alpha }\}; \end{equation}
\item $V_{\mathbb T}(I)=\emptyset$.
\end{enumerate} 
\end{dfn}

\begin{prop}
Let $R=S/I$. If $I$ is Cox-Gorenstein then  \begin{enumerate}\itemsep=-2pt
\item $\dim_\C R^N = 1$; \item
the natural bilinear morphism (called ``Poincar\'e duality'') \begin{equation}\label{pair}R^\alpha \times R^{N-\alpha } \to R^N\simeq \C \end{equation} is nondegenerate
whenever $\alpha$ and $N-\alpha$ are effective.   \end{enumerate} 
\end{prop}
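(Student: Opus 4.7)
The plan is to derive both claims directly from the defining equation \eqref{eqlambda}, using the irrelevance condition $V_{\mathbb{T}}(I)=\emptyset$ (together with $I\subseteq B$) only to guarantee that the functional $\Lambda$ is nonzero.

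For part (1), I would specialize \eqref{eqlambda} at $\alpha=N$. Since $S^{N-N}=S^{0}\simeq\C$, the identity collapses to $I^{N}=\ker\Lambda\subseteq S^{N}$. Provided $\Lambda\neq 0$, the induced map $\bar{\Lambda}\colon R^{N}=S^{N}/\ker\Lambda\to\C$ is an isomorphism, simultaneously yielding $\dim_{\C}R^{N}=1$ and the canonical identification $R^{N}\simeq\C$ used in \eqref{pair}. To exclude $\Lambda=0$, I would plug $\alpha=0$ into \eqref{eqlambda}: if $\Lambda\equiv 0$ then the right-hand side becomes all of $S^{0}=\C$, giving $1\in I^{0}\subseteq I\subseteq B$, which is impossible since the irrelevant ideal is generated by monomials of nonzero effective class, hence $B^{0}=0$.

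For part (2), I would first verify that the pairing is well defined: the multiplication $S^{\alpha}\times S^{N-\alpha}\to S^{N}$ followed by the quotient to $R^{N}$ vanishes on $I^{\alpha}\times S^{N-\alpha}$, because for $f\in I^{\alpha}$ and $g\in S^{N-\alpha}$ equation \eqref{eqlambda} gives $\Lambda(fg)=0$, so $fg\in\ker\Lambda=I^{N}$; the symmetric argument handles $S^{\alpha}\times I^{N-\alpha}$. Nondegeneracy in the first slot is then a direct restatement of \eqref{eqlambda}: a class $\bar f\in R^{\alpha}$ pairs trivially with all of $R^{N-\alpha}$ iff $\Lambda(fg)=0$ for every $g\in S^{N-\alpha}$, which by definition means $f\in I^{\alpha}$, i.e.\ $\bar f=0$. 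Nondegeneracy in the second slot is the same statement with $\alpha$ and $N-\alpha$ exchanged, which is legitimate because both are assumed effective. The only real subtlety in the whole argument is the nonvanishing of $\Lambda$; once that is in place, everything else is a direct unwinding of \eqref{eqlambda} combined with the graded ideal structure of $I$, so the main obstacle is simply translating the irrelevance hypothesis into $\Lambda\neq 0$ via $B^{0}=0$ as above.
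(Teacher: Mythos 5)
The paper itself gives no proof of this proposition --- it explicitly defers all proofs in this section to \cite{Papelito3} --- and judged on its own your argument is correct and matches the standard unwinding used there: setting $\alpha=N$ in \eqref{eqlambda} gives $I^N=\ker\Lambda$, so $R^N\simeq\C$ once $\Lambda\neq0$, and nondegeneracy of \eqref{pair} is a direct restatement of \eqref{eqlambda} in each slot. Your treatment of the one genuine subtlety is also sound: $\Lambda=0$ would force $I^0=S^0=\C$, contradicting $I\subseteq B$, since $B^0=0$ (any nonconstant monomial of class $0$ would contradict $S^0\simeq\C$, which the paper records); note this shows, as you observe, that only $I\subseteq B$ and not $V_{\mathbb T}(I)=\emptyset$ is actually needed here.
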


We shall need to use a {{\em Euler form} on $\Pj_\Sigma$ \cite{BatyrevCox,cox-res,ccd}.} We  denote by $M$ the dual lattice of the lattice $N$ which contains the fan $\Sigma$, i.e., $\Sigma\subset N\otimes  \R$.  Let $d=\dim \Pj_\Sigma$.

\begin{dfn}\label{form}Fix an integer basis  $u_1,\dots u_d$ for the lattice $M$. Then given a subset $\iota=\{i_1,\dots, i_d\}\subset \{1,\dots,\#\Sigma(1) \} $, where $\#\Sigma(1)$ is the number of rays in $\Sigma$, we define
$$\det(e_{\iota}):=\det\big(<u_j,e_{i_{h}}>_{1\leq j,h\leq d} \big) ;$$
moreover, $dx_{\iota}=dx_{i_1}\wedge\cdots \wedge dx_{i_d} $ and $\hat{x}_{\iota}=\Pi_{i\nin\iota}x_{\iota}$.
\end{dfn}

\begin{dfn} A Euler form on $\Pj_\Sigma$ is a {Zariski} 
$d$-form $\Omega_0 $   defined as 
$$\Omega_0:=\sum_{|\iota|=d}\det(e_{\iota})\hat{x}_{\iota}dx_{\iota}$$
where the sum is over all subsets $\iota\subset \{1,\dots ,\#\Sigma(1)\}$ with $d$ elements. 
\label{euler}
\end{dfn}
For more details about these definitions  see \cite{BatyrevCox}.

Let $f_0,\dots,f_d$ be homogeneous polynomials with $ \deg(f_i) = {\alpha_i}$, where  each $\alpha_i$ is ample, and let $ N = \sum_i\alpha_i-\beta_0$; here
$\beta_0$ is the anticanonical class of $\Pj_\Sigma$. Assume that the $f_i$ have no common zeroes in $\Pj_\Sigma$, i.e., $V_{\mathbb T}(f_0,\dots,f_d)=\emptyset$. For each $G \in S^N$ one can define a meromorphic 
$d$-form $\xi_G$ on $\Pj_\Sigma$ by letting
$$ \xi_G = \frac{G\,\Omega_0}{f_0\cdots f_d}$$
where $\Omega_0$ is a Euler form on $\Pj_\Sigma$. The form $\xi_G$ determines a class in $H^d(\Pj_\Sigma,\omega)$,
where $\omega$ is the canonical sheaf of $\Pj_\Sigma$, i.e., the sheaf of Zariski $d$-forms on $\Pj_\Sigma$, and  
the trace morphism  $\operatorname{Tr}_{\Pj_\Sigma}\colon H^d(\Pj_\Sigma,\omega)\to\C$ associates a complex number to $G$. We  
define $\Lambda\in (S^N)^\vee$ as 
\begin{equation}\label{deflambdator} \Lambda(G) = \operatorname{Tr}_{\Pj_\Sigma}([\xi_G])\in \C.\end{equation}
 
We can now state a toric version of Macaulay's theorem.

\begin{thm}  
The linear map defined in Eq.~\eqref{deflambdator} 
satisfies the condition in Definition \ref{lambdator}. Therefore, the ideal $I=(f_0,\dots,f_d)$ is a Cox-Gorenstein ideal of socle degree $N=\sum_i\deg (f_i)-\beta_0$.
\label{toricmac}
\end{thm}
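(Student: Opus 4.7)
The plan is to verify both conditions of Definition \ref{lambdator} for the ideal $I = (f_0,\dots,f_d)$ and the linear functional $\Lambda$ of \eqref{deflambdator}. Condition (2) is immediate: the hypothesis that $f_0,\dots,f_d$ have no common zeroes in $\Pj_\Sigma$ is exactly $V_{\mathbb T}(I)=\emptyset$. The real content lies in condition (1), which I would split into two inclusions. First, the ``easy'' direction: $I^\alpha \subseteq \{f\in S^\alpha : \Lambda(fg)=0 \text{ for all } g\in S^{N-\alpha}\}$. Second, the ``hard'' direction: any $f\in S^\alpha$ annihilating $S^{N-\alpha}$ under $\Lambda$ must actually lie in $I^\alpha$.

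For the easy direction I would work with the Čech cohomology of the affine-type cover $U_i:=\Pj_\Sigma\setminus V(f_i)$ of $\Pj_\Sigma$, which is a genuine cover precisely because of the no-common-zeroes assumption. On $U_0\cap\cdots\cap U_d$ the form $\xi_G = G\,\Omega_0/(f_0\cdots f_d)$ is regular, and thus defines a Čech $d$-cocycle whose class in $H^d(\Pj_\Sigma,\omega)$ is $[\xi_G]$. By $\C$-linearity of $\Lambda$ it is enough to treat $G = f_i\, h\, g$ with $h\in S^{\alpha-\alpha_i}$ and $g\in S^{N-\alpha}$; but then $\xi_G = hg\,\Omega_0/\bigl(\prod_{j\neq i} f_j\bigr)$ is already regular on $\bigcap_{j\neq i}U_j$, so its Čech cocycle on the full cover is a coboundary and $[\xi_G] = 0$. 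Applying $\operatorname{Tr}_{\Pj_\Sigma}$ gives $\Lambda(fg)=0$ for every $f\in I^\alpha$ and $g\in S^{N-\alpha}$.

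The hard direction is the perfectness of the induced pairing $S^\alpha/I^\alpha\times S^{N-\alpha}/I^{N-\alpha}\to \C$, which is the essential content of toric Macaulay duality. My approach would be to exploit the Koszul complex of the tuple $(f_0,\dots,f_d)$, which, precisely because $V_{\mathbb T}(I)=\emptyset$ on a complete simplicial toric variety of dimension $d$, is exact away from the irrelevant ideal and sheafifies to a resolution. Twisting by $\cO_{\Pj_\Sigma}(N-\alpha)$ and chasing the resulting spectral sequence together with Serre duality on $\Pj_\Sigma$ identifies $H^d(\Pj_\Sigma,\omega)$ with $\C$ via $\operatorname{Tr}_{\Pj_\Sigma}$ and realises the natural multiplication $R^\alpha\times R^{N-\alpha}\to R^N$ as the toric residue pairing $\Lambda(\cdot\;\cdot)$. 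Non-degeneracy of this pairing, combined with the easy direction, then delivers \eqref{eqlambda} in both inclusions.

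The main obstacle is precisely the non-degeneracy: one must show that the Koszul-induced map to $H^d(\Pj_\Sigma,\omega)$ is surjective and that the annihilator of $S^\alpha/I^\alpha$ under $\Lambda$ is exactly $I^{N-\alpha}/I^{N-\alpha}=0$. In the toric setting this is delicate because the intermediate sheaves $\cO_{\Pj_\Sigma}(\alpha-\sum_{j\in J}\alpha_j)$ can carry unwanted higher cohomology, so one needs to argue that these contributions do not obstruct the comparison of Čech and Koszul spectral sequences, or alternatively to invoke the toric residue theorem of \cite{BatyrevCox,cox-res,ccd}. Once this cohomological input is in hand --- as is carried out in \cite{Papelito3} --- both the one-dimensionality of $R^N$ and the perfectness of the Poincaré pairing fall out, establishing that $I$ is Cox-Gorenstein of socle degree $N=\sum_i\alpha_i-\beta_0$.
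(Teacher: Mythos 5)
Your proposal cannot be checked against an in-paper argument, because the paper does not prove Theorem \ref{toricmac} at all: Section 3 is explicitly a review section, with all proofs deferred to \cite{Papelito3}. Measured against the strategy of that cited proof, your outline is the right one and follows essentially the same route. Condition (2) is indeed immediate from the no-common-zeroes hypothesis, and your \v{C}ech argument for the inclusion $I^\alpha\subseteq\{f\in S^\alpha \mid \Lambda(fg)=0\ \forall g\in S^{N-\alpha}\}$ is exactly the standard toric-residue computation: the $U_i=\Pj_\Sigma\setminus V(f_i)$ are affine (the $\alpha_i$ are ample), and for $G=f_i h g$ the form $\xi_G$ extends over the $d$-fold intersection $\bigcap_{j\neq i}U_j$, so its degree-$d$ cocycle is a coboundary and $[\xi_G]=0$. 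That part is complete and correct.

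The gap is the one you flag yourself: the reverse inclusion. You reduce it to perfectness of the pairing $R^\alpha\times R^{N-\alpha}\to\C$ and then invoke \cite{cox-res,ccd} and ``as is carried out in \cite{Papelito3}'' rather than supplying the argument, and the missing pieces are precisely where the work lies. Concretely: (i) the Koszul/spectral-sequence argument yields an injection of $R^\alpha$ into $(S^{N-\alpha})^\vee$ only if the intermediate twists $\cO(\alpha-\sum_{j\in J}\alpha_j)$ have no cohomology in intermediate degrees; ampleness of the $\alpha_i$ does not make these twists nef or anti-nef, so one must deploy the toric vanishing theorems (Demazure/Bott--Danilov--Steenbrink, Batyrev--Borisov) with careful degree bookkeeping --- this is the real content of the cited proof, not a routine verification. (ii) One must also check that the duality produced by the spectral sequence is computed by $(f,g)\mapsto\Lambda(fg)$, i.e.\ that the abstract Serre-duality pairing agrees with the explicit trace of $\xi_{fg}$; without this compatibility you get nondegeneracy of some pairing, not of the one in Eq.~\eqref{deflambdator}. (iii) Condition (1) of Definition \ref{lambdator} must hold for every $\alpha\in\operatorname{Cl}(\Pj_\Sigma)$, including those with $S^{N-\alpha}=0$, where it forces $I^\alpha=S^\alpha$; perfectness subsumes this, but only if the injection in (i) is established in those degrees as well. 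Since the paper itself imports all of this from \cite{Papelito3}, your proposal is no less complete than the paper's own treatment and its skeleton matches the cited proof; but as a standalone proof it stops exactly where the genuine difficulty begins.
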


Examples of   Cox-Gorenstein ideals may be given in terms of {\em toric Jacobian ideals.}
For every ray $\rho\in\Sigma(1)$  denote by $v_\rho$ its rational generator,
and by $x_\rho$ the corresponding variable in the Cox ring. Recall that $d$ is the dimension of the toric variety $\Pj_\Sigma$, while we denote
by $r=\#\Sigma(1)$ the number of rays. 
Given $f\in S^\beta$ one defines its {\em toric Jacobian ideal}   as 
$$J_0(f) = \left( x_{\rho_1} \frac{\partial f }{\partial  x_{\rho_1}}, \dots,  x_{\rho_r} \frac{\partial f }{\partial  x_{\rho_r}} \right).$$

We recall from
\cite{BatyrevCox} the   definition of nondegenerate hypersurface and some properties (Def.~4.13 and Prop.~4.15).

\begin{dfn} Let $f\in S^\beta$, with $\beta$ an ample Cartier class. The associated hypersurface $X_f$ is nondegenerate if for all $\sigma\in\Sigma$ the affine hypersurface $X_f\cap O(\sigma)$ is a smooth codimension one subvariety of the orbit $O(\sigma)$ of the action of the torus
$\mathbb T^d$. 
\end{dfn}

\begin{prop} \begin{enumerate} \itemsep=-3pt\item Every nondegenerate hypersurface is quasi-smooth.
\item If $f$ is generic then $X_f$ is nondegenerate.
\end{enumerate} 
\end{prop}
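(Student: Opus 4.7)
\medskip
\noindent\textbf{Proof plan.}

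For part (1), the plan is to compare the two smoothness conditions orbit by orbit. Recall that quasi-smoothness of $X_f$ means that the affine cone $\widehat{X}_f=\{f=0\}\subset \A^r \setminus Z(B)$ is smooth, where $r=\#\Sigma(1)$. Using the Cox quotient presentation $\Pj_\Sigma = (\A^r \setminus Z(B))/G$ with $G$ a reductive group acting with finite stabilizers on $\A^r\setminus Z(B)$, the torus orbit decomposition $\Pj_\Sigma = \bigsqcup_{\sigma\in\Sigma}O(\sigma)$ lifts to a decomposition of $\A^r\setminus Z(B)$ into locally closed $G$-saturated pieces $\widehat{O}(\sigma)$. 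I would show that smoothness of $X_f\cap O(\sigma)$ as a codimension-one subvariety of $O(\sigma)$ is equivalent to smoothness of $\widehat{X}_f\cap \widehat{O}(\sigma)$ as a codimension-one subvariety of $\widehat{O}(\sigma)$, because the fibers of the quotient map are the $G$-orbits which have constant dimension on $\widehat{O}(\sigma)$. Running this equivalence over every cone $\sigma\in\Sigma$ and taking the union then gives smoothness of $\widehat{X}_f$, i.e.\ quasi-smoothness of $X_f$.

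For part (2), the plan is a Bertini-type genericity argument, carried out one orbit at a time. Fix $\sigma\in\Sigma$. Restricted to the dense torus of $O(\sigma)\simeq (\C^\ast)^{d-\dim\sigma}$, the polynomial $f\in S^\beta$ becomes a Laurent polynomial in suitable characters, and varying $f$ over the linear system $|\beta|$ (which is base-point free on each orbit since $\beta$ is ample and $X_f\cap O(\sigma)$ may be described through the Newton polytope of $\beta$), we get a linear system on $O(\sigma)$ whose base locus is empty. The standard Bertini theorem for base-point-free linear systems on smooth quasi-projective varieties then yields an open dense subset $U_\sigma\subset |\beta|$ such that for $f\in U_\sigma$ the intersection $X_f\cap O(\sigma)$ is either empty or smooth of codimension one in $O(\sigma)$. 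Finally, since $\Sigma$ is finite, the intersection $U=\bigcap_{\sigma\in\Sigma}U_\sigma$ is an open dense subset of $|\beta|$; for $f\in U$, the hypersurface $X_f$ is nondegenerate.

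The main obstacle I expect is part (1), specifically making precise the orbit-by-orbit translation between ``smooth of codimension one in $O(\sigma)$'' and the corresponding statement on $\widehat{O}(\sigma)\subset \A^r\setminus Z(B)$. One must check that the $G$-action on $\widehat{O}(\sigma)$ is free (or at least has stabilizers of constant dimension) so that the quotient map $\widehat{O}(\sigma)\to O(\sigma)$ is a smooth submersion; only then does smoothness descend cleanly in both directions. In the simplicial case this follows from the fact that the isotropy groups of $G$ at points of $\widehat{O}(\sigma)$ are finite, which in turn rests on the combinatorics of the rays of $\sigma$. Part (2) is comparatively standard once the correct Bertini statement is invoked, and the final union-over-$\sigma$ step is immediate because $\Sigma$ is finite.
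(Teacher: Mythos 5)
The paper itself offers no proof of this Proposition: it is recalled verbatim from Batyrev--Cox (\cite{BatyrevCox}, Def.~4.13 and Prop.~4.15), and the section's preamble explicitly defers all proofs to the references. So your sketch can only be measured against the standard argument in that source. Your part (2) is fine in outline: an ample Cartier class on a complete toric variety is globally generated, so $|\beta|$ restricts to a base-point-free linear system on each orbit $O(\sigma)\simeq(\C^*)^{d-\dim\sigma}$, Bertini (over $\C$, for base-point-free systems on smooth quasi-projective varieties) gives a dense open $U_\sigma\subset|\beta|$ on which the intersection is smooth or empty, and finiteness of $\Sigma$ lets you intersect the $U_\sigma$. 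Likewise, your reduction in part (1) of smoothness of $X_f\cap O(\sigma)$ to smoothness of $\widehat{X}_f\cap\widehat{O}(\sigma)$ is sound: in the simplicial case the stabilizers are finite, the fibers of $\widehat{O}(\sigma)\to O(\sigma)$ are equidimensional $G$-orbits, and smoothness ascends and descends along the resulting smooth surjective morphism.

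The genuine gap is the last step of part (1): from ``$\widehat{X}_f\cap\widehat{O}(\sigma)$ is smooth of codimension one in $\widehat{O}(\sigma)$ for every $\sigma$'' you conclude that the union $\widehat{X}_f=\bigcup_\sigma\bigl(\widehat{X}_f\cap\widehat{O}(\sigma)\bigr)$ is smooth. That inference is false in general: a variety stratified into smooth locally closed strata need not be smooth (a nodal plane curve is the disjoint union of the node, a smooth point, and the smooth complement, yet it is singular at the node). What rescues the argument --- and is precisely the content of Prop.~4.15 in \cite{BatyrevCox} --- is that $\widehat{X}_f$ is cut out by the \emph{single} equation $f$ in the smooth variety $\A^r\setminus Z(B)$, so the Jacobian criterion applies pointwise: at $p\in\widehat{X}_f\cap\widehat{O}(\sigma)$, smoothness of the codimension-one scheme $\{f=0\}\cap\widehat{O}(\sigma)$ forces the restriction of $df(p)$ to $T_p\widehat{O}(\sigma)$ to be nonzero, hence $df(p)\neq 0$ as a linear form on all of $T_p(\A^r\setminus Z(B))$, hence $\widehat{X}_f$ is smooth at $p$; running over all strata then covers every point of $\widehat{X}_f$. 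Note that this step requires reading nondegeneracy scheme-theoretically: if one only assumes the \emph{reduced} intersection is smooth, the differential can still vanish identically along it (e.g.\ $f=x^2$ restricted to a coordinate plane), and the implication breaks. Your write-up omits exactly this Jacobian-criterion argument, and without it part (1) is unproven.
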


We collect here, with some changes in the terminology, some results that are already 
 contained in Prop.~5.3 of \cite{cox-res}.
 
\begin{prop} Let $f\in S^\beta$, and let $\{\rho_1,\dots,\rho_d\}\subset\Sigma(1)$ be such that
$v_{\rho_1},\dots,v_{\rho_d}$ are linearly independent.  \begin{enumerate}  \item The toric Jacobian ideal of $f$ coincides with the ideal 
$$ \left( f, x_{\rho_1} \frac{\partial f }{\partial  x_{\rho_1}}, \dots,  x_{\rho_d} \frac{\partial f }{\partial  x_{\rho_d}} \right).$$
\item The following conditions are equivalent:
\begin{enumerate} \item $f$ is nondegenerate; \item the polynomials $x_{\rho_i} \frac{\partial f }{\partial  x_{\rho_i}}$, $i=1,\dots,r$, do not vanish simultaneously on $X_f$; \item
the polynomials  $f$ and $x_{\rho_i} \frac{\partial f }{\partial  x_{\rho_i}}$, $i=1,\dots,d$, do not vanish simultaneously on $X_f$.
\end{enumerate}
\item If   $\beta$ is ample and $f$ is nondegenerate, then $J_0(f)$ is a   Cox-Gorenstein ideal of socle degree
$N = (d+1)\beta-\beta_0$, where $\beta_0$ is the anticanonical class of $\Pj_\Sigma^d$.
\end{enumerate}
\end{prop}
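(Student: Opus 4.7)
The plan is to prove Part (1) via an Euler-type identity in the Cox ring, and then to derive Parts (2) and (3) from it combined with the toric Macaulay theorem (Theorem \ref{toricmac}). For Part (1), the key observation is that $\Q$-linear functionals on $\Cl(\Pj_\Sigma)$ are in bijection with diagonal derivations $\sum_\rho c_\rho\, x_\rho\, \partial/\partial x_\rho$ of the Cox ring that act as a scalar on every graded piece. Concretely, a tuple $(c_\rho)_{\rho\in\Sigma(1)}$ descends to a functional $\phi$ on $\Cl(\Pj_\Sigma)\otimes\Q$ if and only if $\sum_\rho c_\rho\,\langle m,v_\rho\rangle=0$ for every $m\in M$, in which case
$$\sum_\rho c_\rho\, x_\rho\,\frac{\partial f}{\partial x_\rho}=\phi(\beta)\,f \qquad\text{for all } f\in S^\beta.$$
Given $\sigma\in\Sigma(1)\setminus\{\rho_1,\dots,\rho_d\}$, I expand $v_\sigma=\sum_{i=1}^d b_i\, v_{\rho_i}$ in $N_\Q$ using that $v_{\rho_1},\dots,v_{\rho_d}$ form a basis, and set $c_\sigma=1$, $c_{\rho_i}=-b_i$ and $c_\tau=0$ for every other ray $\tau$. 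The compatibility condition then reduces to $\langle m,\, v_\sigma-\sum_i b_i\, v_{\rho_i}\rangle=0$, which holds by construction. The resulting identity
$$x_\sigma\,\frac{\partial f}{\partial x_\sigma}-\sum_{i=1}^d b_i\, x_{\rho_i}\,\frac{\partial f}{\partial x_{\rho_i}}=\phi(\beta)\,f$$
places $x_\sigma\,\partial f/\partial x_\sigma$ inside the ideal generated by $f$ and the $x_{\rho_i}\,\partial f/\partial x_{\rho_i}$, while the reverse inclusion is trivial.

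For Part (2), the equivalence (b)$\Leftrightarrow$(c) is immediate from Part (1), since $f\in J_0(f)$ and so appending $f$ to the list of generators does not change the vanishing locus in $\Pj_\Sigma$. For (a)$\Leftrightarrow$(b) I would argue orbit by orbit: if $p\in O(\sigma)$ then $x_\rho(p)=0$ iff $\rho\in\sigma(1)$, so simultaneous vanishing of all $x_\rho\,\partial f/\partial x_\rho$ at a point $p\in X_f$ is equivalent to $\partial f/\partial x_\rho(p)=0$ for every $\rho\notin\sigma(1)$. This in turn says precisely that $X_f\cap O(\sigma)$ either fails to be of codimension one in $O(\sigma)$ (the case $f|_{O(\sigma)}\equiv 0$, in which the orbit-tangent derivatives vanish automatically) or has a singular point at $p$. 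Both possibilities are exactly the ways nondegeneracy can fail, and running the argument backward gives the converse.

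Part (3) then follows by applying Theorem \ref{toricmac} to the $d+1$ generators $f,\, x_{\rho_1}\,\partial f/\partial x_{\rho_1},\dots,\, x_{\rho_d}\,\partial f/\partial x_{\rho_d}$ of $J_0(f)$ produced by Part (1): each has the ample degree $\beta$, and by Part (2) they share no common zero on $\Pj_\Sigma$. The theorem concludes that $J_0(f)$ is Cox-Gorenstein of socle degree $(d+1)\beta-\beta_0$. The only real obstacle in the whole plan is isolating the right linear form in Part (1); once the bijection between $\Cl$-functionals and scalar-acting diagonal derivations is in place, the rest reduces to straightforward linear algebra and a direct appeal to Theorem \ref{toricmac}.
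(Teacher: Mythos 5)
The paper never proves this proposition: it is imported verbatim (``with some changes in the terminology'') from Prop.~5.3 of \cite{cox-res}, and the section's preamble defers all proofs to \cite{Papelito3}. So there is no internal argument to compare against; your proposal is a reconstruction, and it follows what is essentially Cox's own line: generalized Euler formulas attached to linear functionals on $\operatorname{Cl}(\Pj_\Sigma)\otimes\Q$ for part (1), an orbit-by-orbit tangent computation for part (2), and the toric Macaulay theorem for part (3). The skeleton is sound. Your Euler identity for each ray $\sigma\notin\{\rho_1,\dots,\rho_d\}$ is correct (the condition $\sum_\rho c_\rho\langle m,v_\rho\rangle=0$ for all $m\in M$ is exactly $v_\sigma-\sum_i b_i v_{\rho_i}=0$, which holds by construction); the equivalence (b)$\Leftrightarrow$(c) does reduce to part (1); the orbit analysis correctly identifies the common zero locus of the $x_\rho\,\partial f/\partial x_\rho$ on $X_f$ with the locus where some $X_f\cap O(\sigma)$ is singular or fails to have codimension one; and part (3) is a legitimate application of Theorem \ref{toricmac}, since all $d+1$ generators have the same ample degree $\beta$ and (a)$\Rightarrow$(c) gives $V_{\mathbb T}(f,x_{\rho_1}\partial f/\partial x_{\rho_1},\dots,x_{\rho_d}\partial f/\partial x_{\rho_d})=\emptyset$, so the socle degree is $(d+1)\beta-\beta_0$.

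Two points need tightening, one of which is a genuine (if small) gap. First, the ``reverse inclusion is trivial'' claim in part (1), equivalently the assertion $f\in J_0(f)$ used again in part (2), is not trivial and is in fact false without a hypothesis on $\beta$: every Euler identity $\sum_\rho c_\rho x_\rho\,\partial f/\partial x_\rho=\phi(\beta)f$ produces $f$ only when $\phi(\beta)\neq 0$, and if $\beta$ is torsion in $\operatorname{Cl}(\Pj_\Sigma)$ every functional $\phi$ kills $\beta$ (for instance $\beta=0$ gives $J_0(f)=0\not\ni f$). The fix is one sentence with your own machinery: $\beta$ is ample (as it is throughout this section, and as is needed even to state nondegeneracy), hence nonzero in $\operatorname{Cl}(\Pj_\Sigma)\otimes\Q$, so some functional has $\phi(\beta)=1$, and the corresponding Euler formula exhibits $f$ as a combination of all the $x_\rho\,\partial f/\partial x_\rho$. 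Second, in (a)$\Leftrightarrow$(b) you silently use that the vector fields $x_\rho\,\partial/\partial x_\rho$ with $\rho\notin\sigma(1)$ descend through the Cox quotient to span the tangent space of the orbit $O(\sigma)$ at any of its points; this is what licenses reading ``$\partial f/\partial x_\rho(p)=0$ for all $\rho\notin\sigma(1)$'' as singularity (or failure of codimension one) of $X_f\cap O(\sigma)$ at $p$, and it deserves an explicit line, including the remark that $f$ is a section rather than a function, so only its differential along its zero locus is intrinsically defined. With those additions your proof is complete.
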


\section{The tangent space to the Noether-Lefschetz locus}
From now we assume $d=2k+1$. Let $N=(k+1)\beta-\beta_0$ and let $J_0(f)$ be the toric Jacobian ideal associated to $f$, which is Cox-Gorenstein of  socle degree $2N+\beta_0$. Then there is a perfect pairing $R_0^{\alpha}\times R_0^{2N+\beta_0-\alpha}\rightarrow R_0^{2N+\beta_0}$  for  $\alpha\leq 2N+\beta_0$. Let us denote by $T'_0$  the subspace of $R^N_0$ which is the kernel of the multiplication  map $\cdot x_1,\dots, x_r P: R_0^{N}\rightarrow R_0^{2N+\beta_0}$ and by $T_0$  {its} lift in $S^{N}$, where $P$ is a preimage of $\gamma$ under the   natural map
$$
\begin{array}{ccccl}
S^N&\to & S^N/J^N & \xrightarrow{\sim} & H^{k,k}_{\text{prim}}(X) \\[5pt]
P&\mapsto& \bar{P} &\mapsto  &\gamma

\end{array}
$$

\begin{dfn}  $T\subset S$ be  the   $Cl(\Sigma)$-graded module such that $T^{\alpha}$ is the largest subspace where $T^{\alpha}\otimes S^{N-\alpha} $ is contained in $T_0$ for $\alpha\leq N$, $T^N=T_0$ and $E^{N+\alpha}=T_0\otimes S^{\alpha}$ for $\alpha\geq 0$.
 
\end{dfn}

\begin{rmk} Note that $T$ is a Cox-Gorentein ideal with socle degree $N$. \end{rmk}

Actually  $T^{\beta}$ is the tangent space of the local Noether-Lefschetz locus at $f$:

\begin{lma} $T_{f}NL_{\lambda,\beta}\cong T^{\beta}$.
\end{lma}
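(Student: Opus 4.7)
The plan is to verify the isomorphism by identifying both sides with the same subspace of $R_0^\beta = S^\beta/J_0(f)^\beta$, using the IVHS description of $\delta$ and Cox-Gorenstein Poincar\'e duality in $R_0$.

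First, I would identify $T_f\mathcal{M}_\beta$ with $R_0^\beta$: first-order deformations of $f$ modulo infinitesimal ambient automorphisms are parametrized by the degree-$\beta$ piece of the toric Jacobian ring. The IVHS map $\delta(g)$ acts on $H^{k,k}_{\text{prim}}(X)$ by multiplication in the Batyrev-Cox residue description, so if $P\in S^N$ is the chosen lift of $\gamma$, then $\delta(g)(\gamma)$ is represented by $\overline{gP}$ in the graded piece of $R_0$ corresponding to $H^{k-1,k+1}_{\text{prim}}(X)$. A first-order deformation preserves the Hodge type $(k,k)$ of $\gamma$ exactly when $\delta(g)\gamma=0$, since for a class $\gamma$ already of type $(k,k)$ the components of type $(k+i,k-i)$ with $i\geq 1$ vanish automatically by conjugation symmetry. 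Thus
$$T_f NL_{\lambda,\beta} \;=\; \{\bar g \in R_0^\beta \mid \overline{gP} = 0 \text{ in } R_0^{N+\beta}\}.$$

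Second, I would apply the Cox-Gorenstein duality from Theorem \ref{toricmac}: since $J_0(f)$ has socle degree $2N+\beta_0$, the pairing $R_0^{N+\beta}\times R_0^{N+\beta_0-\beta} \to R_0^{2N+\beta_0}\cong\C$ is perfect, so $\overline{gP}=0$ in $R_0^{N+\beta}$ is equivalent to $\overline{gPq}=0$ in $R_0^{2N+\beta_0}$ for every $q\in S^{N+\beta_0-\beta}$. Writing $q=h\cdot x_1\cdots x_r$ with $h\in S^{N-\beta}$, this is exactly the condition $\overline{gh\cdot x_1\cdots x_r\cdot P}=0$, that is, $gh\in T_0$ for every $h\in S^{N-\beta}$.

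Third, comparing with the definition of $T^\beta$ as the largest subspace of $S^\beta$ satisfying $T^\beta\cdot S^{N-\beta}\subset T_0$, the condition just obtained characterizes exactly the image of $T^\beta$ in $R_0^\beta$. Since elements of $J_0(f)^\beta$ act as zero on $\overline{P}$ (they kill the whole Hodge class for trivial reasons), this image identifies $T_f NL_{\lambda,\beta}$ with $T^\beta$.

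The main obstacle is the step in which an arbitrary $q\in S^{N+\beta_0-\beta}$ is replaced by one of the form $h\cdot x_1\cdots x_r$: this requires that multiplication by the monomial $x_1\cdots x_r$ (of degree $\beta_0$) induces a surjection $R_0^{N-\beta}\twoheadrightarrow R_0^{N+\beta_0-\beta}$, or at least that the cokernel lies in the annihilator of $\overline{gP}$. This is precisely the place where the toric nature of the problem enters, through the passage between $R_0$ and its logarithmic counterpart $R_1$ built into the Batyrev-Cox residue description and the Euler-form construction of $\Lambda$ from Definition \ref{euler}. Once this surjectivity is in hand, the two Poincar\'e-type conditions coincide and the lemma follows.
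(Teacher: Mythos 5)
Your overall strategy --- identifying both sides with a subspace of the Jacobian ring and running the chain of equivalences through Cox--Gorenstein Poincar\'e duality --- is the same as the paper's, and your degree bookkeeping is correct. But the step you flag as the ``main obstacle'' is a genuine gap, and it cannot be closed in the form you propose: the surjectivity of $\cdot\,x_1\cdots x_r\colon R_0^{N-\beta}\to R_0^{N+\beta_0-\beta}$ is equivalent, by the very duality you invoke, to injectivity of $\cdot\,x_1\cdots x_r\colon R_0^{N+\beta}\to R_0^{N+\beta+\beta_0}$, and the kernel of that map is $\bigl(J_0(f):x_1\cdots x_r\bigr)^{N+\beta}/J_0(f)^{N+\beta}$, which is nonzero in general. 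That quotient is exactly the discrepancy between the two Jacobian-type rings governing quasi-smooth toric hypersurfaces, so what you are missing is not a technical lemma to be supplied later; the statement is simply false in general.

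The root of the problem is your first step. In the Batyrev--Cox description, the primitive cohomology of a quasi-smooth hypersurface and the IVHS multiplication are computed not in $R_0=S/J_0(f)$ but in $R_1=S/J_1(f)$, where $J_1(f)=\bigl(J_0(f):x_1\cdots x_r\bigr)$. Accordingly, the tangent-space description (Theorem 6.2 of \cite{Papelito1}, which is what the paper's proof cites) reads $T_fNL_{\lambda,\beta}=\{g\in S^{\beta}\mid \overline{Pg}=0 \ \text{in}\ R_1^{N+\beta}\}$, which unwinds to $\overline{x_1\cdots x_r\,P\,g}=0$ in $R_0^{N+\beta+\beta_0}$. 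This is precisely why the paper defines $T_0'$ as the kernel of multiplication by $x_1\cdots x_r\,P$ on $R_0^{N}$, rather than by $P$ alone. Once the factor $x_1\cdots x_r$ is built into the tangent-space condition, your own duality computation closes with no surjectivity claim at all: $\overline{x_1\cdots x_r\,P\,g}=0$ in $R_0^{N+\beta+\beta_0}$ iff, by Poincar\'e duality in $R_0$, $\overline{x_1\cdots x_r\,P\,g\,h}=0$ in $R_0^{2N+\beta_0}$ for all $h\in S^{N-\beta}$, iff $gh\in T_0$ for all such $h$, iff $g\in T^{\beta}$. In short, the obstacle you met is an artifact of writing the tangent space in the wrong ring; correcting that identification makes the obstacle disappear rather than requiring it to be overcome.
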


 \begin{proof} A superimposed bar will denote the class in $R=S/J$ of an element in $S$. 
Now, $H\in T^{\beta}$ if and only if $\bar{H}\otimes R^{N-\beta}$  is contained in $T'_0$, which is equivalent to $$\overline{x_0...x_r}\bar{P}\bar{H}\otimes R^{N-\beta}=0\quad\text{in}\quad R^{N+\beta_0};$$ using Poincar\'e duality that means $\overline{x_0...x_r}\bar{P}\bar{H}=0$ in $R^{N+\beta+\beta_0}$ and equivalently $\bar{P}\bar{H}=0$ in $R^{N+\beta}$ if and only if $H\in T_{f}NL_{\lambda,\beta}$. (see Theorem 6.2 in \cite{Papelito1}). 
\end{proof}

Let us suppose that $V$ is the zero locus of $<A_1,\dots, A_{k+1}>$ and since $V\subset X_f$ there exist $K_1,\dots, K_{k+1}$ polynomials of degree $\beta-\deg(A_i)$ such that $f=A_1K_1+\dots +A_{k+1}K_{k+1}$. Let $I=<A_1, \dots, A_{k+1},K_1,\dots K_{k+1}>$.

\begin{prop}$ T^{\alpha}=I^{\alpha}$ for  $\alpha \leq N$.

\end{prop}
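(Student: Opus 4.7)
The plan is to recast both sides as Cox–Gorenstein ideals of the same socle degree and then exploit the uniqueness (up to scalar) of the associated linear form.

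\emph{Reformulation.} Running the argument in the proof of the preceding lemma for general $\alpha\le N$, the condition $HG\in T_0$ for every $G\in S^{N-\alpha}$ is equivalent, via Poincar\'e duality in $R_0=S/J_0(f)$ and cancellation of the factor $x_1\cdots x_r$ exactly as in the lemma, to $\overline{PH}=0$ in $R_0^{N+\alpha}$. Hence $T^\alpha=(J_0(f):P)\cap S^\alpha$, and we must show $(J_0(f):P)\cap S^\alpha=I^\alpha$ for $\alpha\le N$.

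\emph{Both ideals are Cox–Gorenstein of socle degree $N$.} For $T$ this is the remark preceding the proposition. For $I$, the $2(k+1)=d+1$ generators $A_1,\dots,A_{k+1},K_1,\dots,K_{k+1}$ have degrees summing to $\sum_i a_i+\sum_i(\beta-a_i)=(k+1)\beta$, giving Macaulay socle degree $(k+1)\beta-\beta_0=N$; their common vanishing locus in $\Pj_\Sigma$ is empty, for a common zero $p$ would force $f(p)=\sum A_i(p)K_i(p)=0$ and, via the product rule $\partial f/\partial x_\rho=\sum_i(K_i\,\partial A_i/\partial x_\rho+A_i\,\partial K_i/\partial x_\rho)$, also $x_\rho\,\partial f/\partial x_\rho\,(p)=0$ for every ray $\rho$, contradicting quasi-smoothness of $X$. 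Theorem~\ref{toricmac} therefore exhibits $I$ as Cox–Gorenstein of socle degree $N$.

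\emph{Containment $I\subseteq T$.} It suffices to prove $PA_i,PK_j\in J_0(f)$. Geometrically, any $H=\sum(\tilde a_iK_i+A_i\tilde k_i)\in I^\beta$ arises as $\partial_tf_t|_0$ from a first-order deformation of the pair $(V,X)$ via $A_{i,t}=A_i+t\tilde a_i$, $K_{i,t}=K_i+t\tilde k_i$, along which $\lambda_{V_t}$ remains algebraic and hence of type $(k,k)$, so $H\in T^\beta$. To handle all graded pieces uniformly we compare the two Cox–Gorenstein linear forms
\[
\Lambda_T(G)=\Lambda_0\bigl(Gx_1\cdots x_r P\bigr),\qquad
\Lambda_I(G)=\operatorname{Tr}_{\Pj_\Sigma}\!\left[\frac{G\,\Omega_0}{\prod_i A_i\prod_j K_j}\right],
\]
where $\Lambda_0$ is the Macaulay trace of $J_0(f)$, and show they are proportional by iterated Grothendieck residues along the divisors $\{A_i=0\}\subset X$. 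This residue identity—equivalently the selection of $P$ as a suitable Koszul/Jacobi-type representative of $[V]_{\mathrm{prim}}$—is the main obstacle and extends Dan's projective-space computation to the Batyrev–Cox toric setup.

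\emph{Conclusion.} With $I\subseteq T$, both ideals Cox–Gorenstein of socle degree $N$, one has $\dim_\C S^N/I^N=\dim_\C S^N/T^N=1$, forcing $I^N=T^N$. The defining forms $\Lambda_I,\Lambda_T\in(S^N)^\vee$ then share a common codimension-one kernel, so $\Lambda_I=c\,\Lambda_T$ with $c\ne 0$. By Definition~\ref{lambdator}, for each $\alpha\le N$ both $I^\alpha$ and $T^\alpha$ equal the annihilator of $S^{N-\alpha}$ under the respective form; proportionality therefore yields $I^\alpha=T^\alpha$, as required.
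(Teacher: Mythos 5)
Your overall architecture matches the paper's: establish $I\subseteq T$, observe that both are Cox--Gorenstein ideals of the same socle degree $N$, and conclude equality of the graded pieces for $\alpha\le N$. Two parts of your argument are fine, and indeed more explicit than the paper: the verification that $I$ is Cox--Gorenstein of socle degree $N$ (the degree count $\sum_i \deg A_i+\sum_j \deg K_j=(k+1)\beta$, and emptiness of $V_{\mathbb T}(I)$ via the product rule and quasi-smoothness of $X$, feeding into Theorem~\ref{toricmac}), and the formal conclusion that a containment of Cox--Gorenstein ideals of equal socle degree forces $I^N=T^N$, hence proportionality of the defining forms, hence $I^\alpha=T^\alpha$ for all $\alpha\le N$.

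The gap is exactly where you flag it: the containment $I\subseteq T$ is never proved. Your first-order deformation argument (deforming $A_i$ and $K_j$ and differentiating $f_t=\sum_i A_{i,t}K_{i,t}$) only yields $I^{\beta}\subseteq T^{\beta}$, whereas your concluding step needs the containment in degree $N$ in order to force $I^N=T^N$. The route you propose to fill this --- proving $\Lambda_I$ and $\Lambda_T$ proportional ``by iterated Grothendieck residues'' --- is not carried out, and it cannot be treated as a black box here: by Definition~\ref{lambdator}, proportionality of the two linear forms is \emph{equivalent} to the statement $I^\alpha=T^\alpha$ being proved, so appealing to it is circular. The paper closes this gap by a different device, which is the actual content of its proof: introduce the auxiliary complete intersections $W_1$, the zero locus of $K_1,A_2,\dots,A_{k+1}$, and $W_2$, the zero locus of $K_1,\dots,K_{k+1}$. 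Since $V\cup W_1=X_f\cap\{A_2=\dots=A_{k+1}=0\}$, and the class of the right-hand side comes from the ambient variety, the primitive classes of $V$ and $W_1$ agree (up to sign), and iterating the swap gives $[\lambda_V]_{\mathrm{prim}}=a\,[\lambda_{W_2}]_{\mathrm{prim}}$ with $a\in\Z$. Then Lemma~\ref{obs} (the Griffiths--Harris bound $H^k(\Omega^k_X(-V))\subseteq H^{k,k}(-\gamma)$), applied to both $V$ and $W_2$, puts all the generators $A_1,\dots,A_{k+1},K_1,\dots,K_{k+1}$ inside $T$ in every degree at once --- precisely the statement you reduced to (your $PA_i,\,PK_j\in J_0(f)$ is its polynomial translation) but did not establish.
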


\begin{proof} 
Let $W_1$ be the zero locus of $<K_1, A_2,\cdots, A_{k+1}>$. Since $V\cup W_1$ is equal to $X_f\cap \{A_2=\dots= A_{k+1}=0\}$ then $\lambda_{V}$ is equal to $\lambda_{W_1}$ in the primitive cohomology. Now, let us denote $W_2$ the zero locus of $K_1, \dots,K_{k+1}$  then, as before,  $[\lambda_{V}]_{\text{prim}}=[a\lambda_{W_2}]_{\text{prim}}$ ($a\in \Z$). By Lemma \ref{obs} we have $<A_1,\dots, A_{k+1}, K_1\dots K_{k+1}>\subset T$. Since $X$ is quasi-smooth the ideal 
$<A_1,\dots A_{k+1}, K_1,\dots K_{k+1} >$ is Cox-Gorenstein with socle degree $N$, the socle degree of $T$, so tha$I$ and $T$ coincide in degree $\alpha\leq N$ . 
\end{proof}

\section{Main theorem}\label{main}

 In this section we prove our main result. We start by recalling the construction of the local Noether-Lefschetz locus \cite{Papelito1}.
Given an ample class $\beta$  in $\Pic(\Pj^{2k+1}_{\Sigma})$, let $$\mathcal{U}_{\beta}\subset \Pj(H^0(\Pj^{2k+1}_{\Sigma}),\cO_{\Pj^{2k+1}_{\Sigma}}(\beta))) $$ be the open subset parameterizing   quasi-smooth hypersurfaces and let $\pi: \chi_{\beta}\to  \mathcal{U}_{\beta}$ be the tautological family. One considers
the local system $\mathcal{H}^{2k} = R^{2k}\pi_{\star}\C \otimes \cO_{\mathcal{U}_{\beta}}$ over $\mathcal{U}_{\beta}$.
 
 If $f\in \mathcal{U}_{\beta}$, assume it determines a nonzero class
$\lambda_f\in H^{k,k}(X_f,\Q)/i^*(H^{k,k}(\Pj^{2k+1}_{\Sigma},\Q))$ and let $U\subset  \mathcal{U}_{\beta}$ be a contractible open subset around $f$.  Finally, let $\lambda\in \mathcal{H}^{2k}(U)$ be the section defined by $\lambda_f$ and let $\bar{\lambda}$ be  its image in $(\mathcal{H}^{2k}/F^k\mathcal{H}^{2k})(U)$, where $$ F^k\mathcal{H}^{2k} =\mathcal{H}^{2k,0}\oplus \mathcal{H}^{2k-1,1}\oplus \dots \oplus \mathcal{H}^{k,k}.$$

\begin{dfn}[Local Noether-Lefschetz Locus] $NL^{k,\beta}_{\lambda,U} = \{G\in U \mid \bar{\lambda}_{G}=0\}$. \label{defNL}\end{dfn}

 Let $\eta$ be a polarization for $\Pj_{\Sigma}^{2k+1}$, that we assume to be primitive in the Picard group.  Given the Hilbert polynomial $P$ of a subscheme $V$, computed with respect to $\eta$,  we denote by $\operatorname{Hilb}_P$ the   Hilbert scheme  {of closed subschemes of $\Pj_{\Sigma}^{2k+1}$ with Hilbert polynomial $P$.} We denote by $Q$ the Hilbert polynomial of   quasi-smooth hypersurface in $\Pj_{\Sigma}^{2k+1}$  whose class in the Picard group is $\beta$. The flag Hilbert scheme $\operatorname{Hilb}_{P,Q}$ parametrizes all pairs $(V,X)$ where $V\in \operatorname{Hilb}_P$ and $X$  is a  quasi-smooth hypersurface in $\Pj_{\Sigma}^{2k+1}$  of  class $\beta$ containing $V$.  Let $\operatorname{pr}_1$ be the projection to the first component and $\operatorname{pr}_2: \operatorname{Hilb}_{P,Q}\rightarrow \mathcal{U}_{\beta} $ the natural projection to the open set which parametrizes  quasi-smooth hypersurfaces in $\Pj_{\Sigma}^{2k+1}$. Note that $\operatorname{pr}_1 (\operatorname{Hilb}_{P,Q})$ is irreducible, so that there exists a unique component in  $\operatorname{Hilb}_{P,Q}$ such that $\operatorname{pr}_1(\operatorname{Hilb}_{P,Q})$ coincides with the parameter space for {complete intersection subschemes  in $\Pj^{2k+1}_{\Sigma}$.

For $Z$ a $d$-dimensional closed subvariety of  $\Pj^{2k+1}_{\Sigma}$ we define its degree as $\deg_\eta Z = [Z] \cdot \eta^d$.

\begin{lma} \label{inNL}    If $X$ is an ample Cartier hypersurface in $\Pj^{2k+1}_{\Sigma}$ whose class in $\Pic(\Pj^{2k+1}_{\Sigma})$ satisfies
$\beta = q\,\eta +\beta'$, where $q\in\Q_{>0}$ and $\beta'$ is a nef Cartier class, and $V$ is a complete intersection $k$-dimensional subvariety contained in $X$, then $\deg_\eta V \ge   q$.
\end{lma}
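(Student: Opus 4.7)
The plan is to reduce the inequality to an intersection-theoretic identity in the Chow ring of $\Pj^{2k+1}_{\Sigma}$, combined with two elementary positivity facts.

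First I would translate the hypothesis that $V$ is a $k$-dimensional complete intersection inside $X$ into a cycle-class identity. I write $V = X\cap Y$, where $Y = D_1\cap\cdots\cap D_k$ is a proper $(k+1)$-dimensional complete intersection in $\Pj^{2k+1}_{\Sigma}$ cut out by hypersurfaces $D_i$ of classes $\alpha_i$; equivalently, $V$ is the zero locus in the Cox ring of a regular sequence of length $k+1$ whose first member is the defining equation of $X$. Either way, in the Chow ring of $\Pj^{2k+1}_{\Sigma}$ one has
\[ [V] \;=\; \beta\cdot\alpha_1\cdots\alpha_k. \]

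Next I cap with $\eta^k$ and substitute the slope decomposition $\beta = q\,\eta + \beta'$ to obtain
\[ \deg_\eta V \;=\; q\,\bigl(\alpha_1\cdots\alpha_k\cdot\eta^{k+1}\bigr) \;+\; \beta'\cdot\alpha_1\cdots\alpha_k\cdot\eta^k. \]
It then remains to bound each summand. For the first, $\alpha_1\cdots\alpha_k\cdot\eta^{k+1} = \deg_\eta Y$ is the degree of the $(k+1)$-dimensional effective cycle $Y$ with respect to the ample Cartier class $\eta$, hence a positive integer and thus $\ge 1$. For the second, restricting $\beta'$ and $\eta$ to $Y$ gives a nef class and an ample class on the projective variety $Y$ of dimension $k+1$, whose top-degree intersection is nonnegative. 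Combining these two estimates yields $\deg_\eta V \ge q\cdot 1 + 0 = q$.

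The main delicate point I expect is the setup step: realizing the given complete intersection $V\subset X$ as $V = X\cap Y$ for an ambient $(k+1)$-dimensional complete intersection $Y$, so that the cycle identity $[V] = \beta\cdot[Y]$ is valid in the possibly singular simplicial toric variety. This is exactly what couples the hypothesis $V\subset X$ to the class $\beta$ and feeds the bound $\ge q$ through the decomposition $\beta = q\,\eta + \beta'$; once this reduction is in place, the remainder of the argument is a routine application of nef/ample positivity (making essential use of the Cartier, rather than merely $\Q$-Cartier, nature of $\eta$ to guarantee the integrality bound $\deg_\eta Y\ge 1$).
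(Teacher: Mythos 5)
Your proposal is correct and takes essentially the same route as the paper's own proof: both realize $V = X\cap W$ for an ambient $(k+1)$-dimensional $W$, expand $\deg_\eta V = \langle \eta^k\cup(q\,\eta+\beta'),[W]\rangle$, and conclude from $\deg_\eta W \ge 1$ (a positive integer, as $\eta$ is ample Cartier) together with nonnegativity of the nef term $\langle \eta^k\cup\beta',[W]\rangle$. The only cosmetic difference is that you write $W$ explicitly as a complete intersection $D_1\cap\cdots\cap D_k$ with class $\alpha_1\cdots\alpha_k$, whereas the paper only uses that $W$ is a closed $(k+1)$-dimensional subvariety.
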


\begin{proof} We shall denote by $(Z)$ the  class  in $A_d(\Pj^{2k+1}_{\Sigma})$ of a  $d$-dimensional closed subvariety $Z$ of $\Pic(\Pj^{2k+1}_{\Sigma})$, and by $[Z]$ its  class in $A^{2k+1-d}(\Pj^{2k+1}_{\Sigma})$. By hypothesis we have $V = X \cap W$ for a closed $(k+1)$-dimensional subvariety $W$ of
$\Pj^{2k+1}_{\Sigma}$. Thus we have
\begin{eqnarray} \deg_\eta V &=&  \langle \eta^k,(W)\cap [X]\rangle = \langle\eta^k\cup[X] , [W]\rangle  =  \langle \eta^k\cup(q\,\eta+\beta') , [W]\rangle  \\
&=& q \deg_\eta W +   \langle \eta^k\cup  \beta' , [W]\rangle\   \ge  \ q  +  \langle \eta^k\cup  \beta' , [W]\rangle. \end{eqnarray}
Since $\beta'$ is nef we have $\langle \eta^k\cup  \beta' , [W]\rangle \ge 0$, hence the claim follows (note that $\deg_\eta W$ is a positive integer). 
\end{proof}

Now we state and prove the main result of this paper.

\begin{thm}  Assume that   $\beta$ is as in the Lemma. Let $V$ be a quasi-smooth intersection in $\Pj^{2k+1}_{\Sigma}$ of codimension $k+1$ and let $X$ be a quasi-smooth hyper-surface of class $\beta$ containing $V$ such that  $\deg_\eta V <  q$. 
Assume also that the Noether-Lefschetz locus $NL_{\lambda_{V}\beta}$  is nonempty.
Then,

\begin{center}
    $\lambda_V$ deforms  to a  $(k,k)$ class if and only if $\lambda_{[V]}$  deforms to an algebraic cycle.

\end{center} 
In particular, $NL_{\lambda_{V},U}^{k,\beta}$ is isomorphic to an irreducible component of  {$\operatorname{pr}_2(\operatorname{Hilb}_{P,Q})$},
where $P$, $Q$ are the Hilbert polynomials of $V$ and $X$, respectively.

\end{thm}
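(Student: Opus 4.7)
My plan is to show that, as germs at $f\in\mathcal{U}_\beta$, the local Noether-Lefschetz locus $NL^{k,\beta}_{\lambda_V,U}$ coincides with the image $\operatorname{pr}_2(\operatorname{Hilb}_{P,Q})$, restricted to the distinguished component of $\operatorname{Hilb}_{P,Q}$ parameterizing complete intersections of the same type as $V$. This simultaneously yields the biconditional and the ``in particular'' assertion that $NL^{k,\beta}_{\lambda_V,U}$ is isomorphic to such an irreducible component.

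First I would dispose of the easy direction. Trivializing the local system $\mathcal{H}^{2k}$ over the contractible $U$, the class $\lambda_{V'}$ of any nearby algebraic cycle $V'\subset X'$ with $(V',X')\in \operatorname{Hilb}_{P,Q}$ parallel-transports to $\lambda_V$ and remains in $F^k\mathcal{H}^{2k}$. This gives the inclusion $\operatorname{pr}_2(\operatorname{Hilb}_{P,Q})\cap U \subseteq NL^{k,\beta}_{\lambda_V,U}$.

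The heart of the argument is a tangent-space computation at $f$. Writing $V=V(A_1,\dots,A_{k+1})$, the condition $V\subset X$ forces a Koszul-type decomposition $f=\sum_i A_iK_i$. First-order deformations of the pair $(V,X)$ in the flag Hilbert scheme are pairs $(\dot A_i,\dot K_i)$, producing $\dot f=\sum_i(\dot A_iK_i+A_i\dot K_i)$, which lies in $I^\beta$ with $I=\langle A_1,\dots,A_{k+1},K_1,\dots,K_{k+1}\rangle$. Conversely every element of $I^\beta$ admits such a decomposition, so $T_f\operatorname{pr}_2(\operatorname{Hilb}_{P,Q})=I^\beta$. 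By the preceding Proposition, $I^\beta=T^\beta$, and by the preceding Lemma $T^\beta=T_fNL^{k,\beta}_{\lambda_V,U}$, so both tangent spaces agree.

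To conclude, since $\operatorname{pr}_2(\operatorname{Hilb}_{P,Q})$ is irreducible (as the image of an irreducible scheme) and is contained in $NL^{k,\beta}_{\lambda_V,U}$ with matching Zariski tangent space at $f$, it must locally fill out an irreducible component. The degree hypothesis $\deg_\eta V<q$ enters through Lemma \ref{inNL}: any $(k)$-dimensional subscheme of $X$ of the shape $X\cap W$ satisfies $\deg_\eta\ge q$, so nearby points of $\operatorname{Hilb}_P$ around $V$ cannot drift into such ``degenerate'' subschemes. This isolates the complete-intersection component and rules out spurious contributions. The hard part will be upgrading the equality of tangent spaces to equality of germs of schemes; this requires a dimension count (or a generic-smoothness statement) showing that $\operatorname{pr}_2(\operatorname{Hilb}_{P,Q})$ has dimension equal to $\dim I^\beta$ at $f$, so that the inclusion of an irreducible subvariety of matching tangent dimension into $NL^{k,\beta}_{\lambda_V,U}$ forces local coincidence, yielding the desired isomorphism with an irreducible component.
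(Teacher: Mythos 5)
Your outline reproduces the paper's strategy (containment of $\operatorname{pr}_2(\operatorname{Hilb}_{P,Q})$ in $NL^{k,\beta}_{\lambda_V,U}$, identification of the first-order data with $I^\beta=T^\beta=T_fNL^{k,\beta}_{\lambda_V,U}$, then a dimension comparison), but the one step you defer --- ``upgrading the equality of tangent spaces to equality of germs'' --- is precisely the mathematical content of the proof, and nothing in your proposal supplies it. Equality of Zariski tangent spaces at the single point $f$ is genuinely insufficient: a cuspidal cubic $C\subset\A^2$ satisfies $T_0C=T_0\A^2$ while $C\subsetneq\A^2$, so an irreducible subvariety of $NL^{k,\beta}_{\lambda_V,U}$ whose tangent space at $f$ fills $T_fNL^{k,\beta}_{\lambda_V,U}$ need not be (or fill out) a component. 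What must actually be proved is the dimension bound $\codim\operatorname{pr}_2(\operatorname{Hilb}_{P,Q})\le\codim T^\beta$, and you only remark that this ``requires a dimension count (or a generic-smoothness statement)'' without giving one. (A smaller point: what your step 2 computes is the image of the differential $d\operatorname{pr}_2$ at $(V,X)$, not the tangent space of the image; the equality $T_f\operatorname{pr}_2(\operatorname{Hilb}_{P,Q})=I^\beta$ can in fact be recovered from the containment in $NL^{k,\beta}_{\lambda_V,U}$ and the Lemma, but by itself it still proves nothing about germs, as the example above shows.)

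The paper closes exactly this gap by integrating the first-order data, which is possible because complete-intersection deformations are unobstructed: it introduces the map $\phi$ induced by $(A_1',\dots,A_{k+1}',K_1',\dots,K_{k+1}')\mapsto\sum_iA_i'K_i'$. Every such hypersurface contains the complete intersection $V(A_1',\dots,A_{k+1}')$, so the image of $\phi$ lies in $\operatorname{pr}_2(\operatorname{Hilb}_{P,Q})$; and the differential of this map at $(A_i,K_i)$ has image exactly $I^\beta$ --- this is your observation that every element of $I^\beta$ decomposes as $\sum_i(\dot A_iK_i+A_i\dot K_i)$, now put to its correct use on an actual family rather than on formal tangent vectors. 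In characteristic zero the closure of the image of a morphism has dimension equal to the generic rank of its differential, which is at least the rank at the chosen point, so $\dim\overline{\operatorname{Im}(\phi)}\ge\dim I^\beta-1$ (only the scaling direction is lost), whence $\codim\operatorname{pr}_2(\operatorname{Hilb}_{P,Q})\le\codim T^\beta=\codim T_fNL^{k,\beta}_{\lambda_V,U}$. Combined with the containment, which gives $\codim\operatorname{pr}_2(\operatorname{Hilb}_{P,Q})\ge\codim NL^{k,\beta}_{\lambda_V,U}\ge\codim T_fNL^{k,\beta}_{\lambda_V,U}$, all these codimensions coincide; in particular $NL^{k,\beta}_{\lambda_V,U}$ has local dimension equal to that of its tangent space at $f$, hence is smooth and locally irreducible there, and the containment of the equidimensional $\operatorname{pr}_2(\operatorname{Hilb}_{P,Q})$ then forces local coincidence. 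All your ingredients appear in this argument, but without the construction of $\phi$ (or an equivalent way of producing an honest family of the right dimension inside $\operatorname{pr}_2(\operatorname{Hilb}_{P,Q})$) your proof does not conclude.
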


\begin{proof} 
By the assumption on the degree of $V$,  one has $\operatorname{pr}_2(\operatorname{Hilb}_{P,Q})\subset NL_{\lambda_{V},U}^{k,\beta}$.
Then,
 $$\codim_U \operatorname{pr}_2(\operatorname{Hilb}_{P,Q}) \geq \codim_U NL_{\lambda_{V},U}^{k,\beta} \geq \codim_{T_XU} T_X NL_{\lambda_{V},U}^{k,\beta }.$$

On the other hand, keeping in mind that $T^{\beta}=I^{\beta} \subset I_V^{\beta}$, we have a natural map $\phi$ from $T_{\beta}$ to $\operatorname{Hilb}_{P,Q}$, which sends  a homogeneous polynomial of degree $\beta$ to its zero locus. One has  $\overline{\operatorname{Im}( \phi)}\subset \overline{\operatorname{pr}_2(\operatorname{Hilb}_{P,Q})}$ and since the zero locus is invariant under the torus action, $\dim T^{\beta}>\dim \overline{\operatorname{Im}(\phi)}$. Hence,
$$\codim \operatorname{pr}_2(\operatorname{Hilb}_{P,Q}) \leq  \codim \overline{\operatorname{Im }(\phi)} \leq \codim T^{\beta}=\codim T_X NL_{\lambda_{V},U}^{k,\beta}. $$
So $\operatorname{pr}_2(\operatorname{Hilb}_{P,Q})$ and $ NL_{\lambda_{V},U}^{k,\beta}$ have the same dimension, which implies the claim.
\end{proof}


\end{document}